\newcommand{\HH}{\mathbf H}
\newcommand{\VV}{\mathbf V}
\newcommand{\QQ}{\mathbf Q}
\newcommand{\ZZ}{\mathbf Z}
\newtheorem{theorem}{Theorem}[section]
\newtheorem{lemma}[theorem]{Lemma}
\theoremstyle{definition}
\theoremstyle{remark}
\numberwithin{equation}{section}
\title{Monodromy of plane curves and quasi-ordinary surfaces}
\author[G.\ Kennedy]{Gary Kennedy}
\address{Ohio State University at Mansfield, 1760 University Drive,
Mansfield, Ohio 44906, USA}
\email{kennedy@math.ohio-state.edu}
\author[L.\ McEwan]{Lee J. McEwan}
\address{Ohio State University at Mansfield, 1760 University Drive,
Mansfield, Ohio 44906, USA}
\email{mcewan@math.ohio-state.edu}
\begin{document}
\maketitle


Consider an irreducible germ of analytic surface $S$ in ${\bf C }^3$, arranged so that the projection 
$\pi: (x,y,z) \mapsto (x,y)$ has its discriminant locus contained in the coordinate axes. This is the local picture of a {\em quasi-ordinary surface}. The theory of such surfaces (which we briefly recall in section \ref{qos}) says that
each sheet may be expressed in the following way:
$$\zeta = \sum c_{\lambda\mu} x^{\lambda} y^{\mu},$$
where the exponents range over certain non-negative rational numbers with a common denominator. Let $d$ denote the number of sheets (equivalently the number of conjugates of $\zeta$). One can write a function defining $S$ by taking a product over all conjugates: 
$$f(x,y,z) = \prod_{k=1}^d(z-\zeta_k).$$
In general the singular locus of such a surface is one-dimensional, with at most two components. A transverse slice $x=C$ (where $C$ is a small nonzero constant) cuts out a singular plane curve. The Milnor fiber of this curve undergoes a monodromy transformation when $C$ loops around the origin; the action on its homology groups is called the {\em vertical monodromy}. In this article we show how to explicitly calculate this monodromy. Our formula is expressed recursively, by associating to our surface two related quasi-ordinary surfaces which we call its {\em truncation} $S_1$ and its {\em derived surface} $S'$, and then expressing the vertical monodromy of $S$ via the monodromies of $S_1$ and of $S'$. 
\par
As is well known, there is another fibration over  a circle, called the {\em Milnor fibration}; here the action on homology is called the  {\em horizontal monodromy}. 
In the course of working out our recursion for vertical monodromy, we have discovered what appears to be a new viewpoint about the horizontal monodromy, expressed in a similar recursion which again invokes the same two associated surfaces. In fact this recursion makes sense even outside the quasi-ordinary context, and thus we have found a novel way to express the monodromy associated to the Milnor fibration of a singular plane curve. We begin by working out this situation, to motivate our later setup and to provide a model for the more elaborate calculation.
\par
As a corollary to our formulas, we have found that from the vertical monodromies (one for each component of the singular locus), together with the surface monodromy formula worked out in \cite{MN} and \cite{GPMN}, one can recover the complete set of characteristic pairs of a quasi-ordinary surface. Since these data depend only on the embedded topology of the surface, we thus have a new proof of Gau's theorem \cite{Gau} in the 2-dimensional case. As another application, we can employ a theorem of Steenbrink \cite{Steenbrink} (extended to the non-isolated case by M. Saito \cite{Saito}) which relates the horizontal and vertical monodromies to the spectrum of the surface and to the spectrum of any member of the Yomdin series. Since the spectrum of an isolated singularity is computable in principle, we expect that the monodromies worked out here may be exploited to calculate the spectrum of a quasi-ordinary surface. We intend to explicate these two applications in subsequent papers.
\par
We begin in section \ref{aplem} with two ``approximation lemmas'' that allow us to replace one function by another when studying their associated fibrations. In section
\ref{plcur} we work out the monodromy of the Milnor fiber of a plane curve singularity. In section \ref{qos} we briefly recall the basic notions of quasi-ordinary surfaces and introduce the ``transverse Milnor fiber.'' Section \ref{rfhvm} formulates and proves our main results. In these results we assume that our quasi-ordinary surface is ``reduced'' (as defined early in section \ref{qos}); our last (very brief) section discusses the non-reduced case.
\par
We wish to thank Clement Caubel, Herb Clemens, Anatoly Libgober, and Joe Lipman for useful conversations regarding this project.


\section{Approximation lemmas}\label{aplem}

In the proofs of our recursive formulas we use the following lemmas. For ease of reference, we give two separate formulations, but clearly the first lemma follows from the second.

\begin{lemma}\label{approx1}
Suppose that $f$ and $g$ are two holomorphic functions on a smooth compact analytic surface $S$ with boundary. Suppose that they have the same divisor $D$, which is transverse to the boundary. (We do not assume that $D$ is reduced.) Suppose that the unit $u=f/g$ always has positive real part. Then, for sufficiently small $\sigma$, the fibration over the circle $|\epsilon|=\sigma$  with fibers $f=\epsilon$ is smoothly isotopic to the fibration with fibers $g=\epsilon$.
\end{lemma}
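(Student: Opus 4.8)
The plan is to join $f$ to $g$ by an explicit holomorphic homotopy with constant divisor and then transport the fibrations along it by an Ehresmann-type argument. Since $u=f/g$ is a unit whose values lie in the right half-plane---a simply connected region avoiding $0$---the branch $\log u$ is globally defined and holomorphic on $S$, and one may set $f_t:=g\,u^{t}$, where $u^{t}:=\exp(t\log u)$, for $t\in[0,1]$. Each $u^{t}$ has argument $t\arg u\in(-\pi/2,\pi/2)$, hence is nowhere zero and holomorphic; thus every $f_t$ is holomorphic with the same divisor $D$, while $f_0=g$, $f_1=f$, and $(p,t)\mapsto f_t(p)$ is smooth. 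The identity
\[
df_t=u^{t}\bigl(dg+t\,g\,d\log u\bigr)
\]
will be the source of all the uniformity in $t$ needed below.

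Next I would form the total space $\mathcal E_\sigma:=\{(p,t)\in S\times[0,1]:|f_t(p)|=\sigma\}$ for small $\sigma$, carrying the projection to $[0,1]$ and the map $(p,t)\mapsto f_t(p)$ onto the circle $|\epsilon|=\sigma$. Suppose one can find a single $\sigma$ such that, uniformly in $t\in[0,1]$: (i) $f_t$ has no critical point where $0<|f_t|\le\sigma$; and (ii) the level set $\{|f_t|=\sigma\}$ is transverse to $\partial S$. Then $\mathcal E_\sigma$ is a compact smooth manifold with corners on which $(p,t)\mapsto(f_t(p),t)$ is a submersion, including along the boundary faces. Lifting $\partial_t$ to a vector field on $\mathcal E_\sigma$ that is tangent to the face lying over $\partial S$ and projects to $0$ on the circle factor, and integrating it (the flow is complete by compactness), one obtains a diffeomorphism from the fiber over $t=0$---which is the fibration with fibers $g=\epsilon$---onto the fiber over $t=1$---the fibration with fibers $f=\epsilon$---commuting with the maps to the circle. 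That flow is the desired smooth isotopy.

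To produce such a $\sigma$: at any critical point $p$ of $f_t$ with $f_t(p)\ne0$ (equivalently $g(p)\ne0$) the identity above forces $dg(p)=-t\,g(p)\,d\log u(p)$, hence $|dg(p)|\le C\,|g(p)|$ with $C=\sup_S|d\log u|<\infty$. A {\L}ojasiewicz-type estimate---or the curve selection lemma---applied to the holomorphic function $g$ shows that the semianalytic set $\{\,0<|g|,\ |dg|\le C|g|\,\}$ cannot accumulate on $|D|=\{g=0\}$; its closure in the compact $S$ therefore avoids $|D|$, so $|g|\ge\rho$ there for some $\rho>0$ independent of $t$. Since $|f_t|=|g|\cdot|u|^{t}$ and $|u|^{t}$ is bounded below by a positive constant uniformly in $t\in[0,1]$, every critical point of $f_t$ with $f_t\ne0$ then has $|f_t|$ bounded below by a positive constant uniform in $t$; any smaller $\sigma$ satisfies (i). Assertion (ii) is obtained in the same way from the identity $d(f_t|_{\partial S})=u^{t}\bigl(d(g|_{\partial S})+t\,g\,d\log u|_{\partial S}\bigr)$ together with the hypothesis that $D$ is transverse to $\partial S$.

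The main obstacle is exactly this uniformity in $t$: a priori nothing prevents the nonzero critical values of $f_t$, or its tangencies with $\partial S$, from drifting toward the origin as $t$ runs over $[0,1]$, which would leave no admissible $\sigma$. What rules this out is the rigid shape $f_t=g\,u^{t}$, letting a single {\L}ojasiewicz estimate for the fixed function $g$, propagated by the bounded holomorphic $1$-form $d\log u$ on the compact surface $S$, govern the entire homotopy.
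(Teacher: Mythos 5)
Your interior argument is correct, and it is a genuinely different mechanism from the paper's: you interpolate multiplicatively, $f_t=g\,u^{t}$ (legitimate, since $\Re u>0$ gives a global holomorphic branch of $\log u$), and you bound the nonzero critical values of $f_t$ away from $0$ uniformly in $t$ via the inequality $|dg|\le C|g|$ at a critical point together with curve selection/{\L}ojasiewicz for the fixed function $g$. The paper instead interpolates linearly, $F_t=tf+(1-t)g$, and shows $\nabla F_t\neq 0$ in a punctured neighborhood of $D$ by proving that the ratio $\lambda=\nabla f/\nabla g$ extends across $D$ with value $u$, so its real part cannot be negative nearby. Your route even uses less than the stated hypothesis (any unit admitting a logarithm would do), which is a point in its favor.

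The gap is at the boundary. The fibrations in the statement are by compact surfaces \emph{with boundary}, so local triviality (and the tangency of your lifted vector field to the face over $\partial S$) requires the fibers $f_t^{-1}(\epsilon)$, $|\epsilon|=\sigma$, to be transverse to $\partial S$, i.e.\ $d(f_t|_{\partial S})$ surjective onto $\mathbf{R}^2$ at the relevant points; your condition (ii) --- transversality of the real hypersurface $\{|f_t|=\sigma\}$ to $\partial S$ --- is strictly weaker and does not give the asserted submersivity of $(p,t)\mapsto(f_t(p),t)$ along the boundary face. More seriously, the claim that this is ``obtained in the same way'' does not transfer: in the interior, vanishing of $df_t$ forces the smallness $|dg|\le C|g|$, to which your {\L}ojasiewicz/Gronwall step applies, whereas on $\partial S$ the failure of surjectivity of $d(f_t|_{\partial S})$ at $p$ (with $df_t(p)\neq0$) means only that the complex line $\ker df_t(p)$ --- the tangent of the fiber --- lies inside $T_p\partial S$, a tangency condition perfectly compatible with $|d(g|_{\partial S})(p)|$ being large; no inequality of the form $|d(g|_{\partial S})|\le C|g|$ results, so your mechanism has no analogue there. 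What is needed, and where the hypothesis that $D$ is transverse to $\partial S$ actually enters, is an argument such as: near a point of $D\cap\partial S$ write $g=(\text{unit})\cdot w^{k}$ in local coordinates $(w,\tau)$ with $D=\{w=0\}$ (the support of $D$ is smooth near $\partial S$); then $df_t=u^{t}w^{k-1}\bigl((\text{unit}+O(w))\,dw+O(w)\,d\tau\bigr)$, so the kernel lines of $df_t$ converge, uniformly in $t$, to the tangent directions of $D$, which by hypothesis are not contained in $T\partial S$; by compactness they remain uniformly non-tangent to $\partial S$ on $\{0<|f_t|\le\sigma\}\cap\partial S$ for small $\sigma$ (recall that this set concentrates near $D\cap\partial S$). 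This is the role played in the paper by the trivialization of a neighborhood of $D\cap\partial S$ in $\partial S$ by disks. With that supplement your proof closes.
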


\begin{lemma}\label{approx2}
Over a circle $|x|=\rho$, let $S$ be the total space of a continuous family of smooth compact analytic surfaces $S_x$ with boundary. Suppose that $f$ and $g$ are two continuous functions such that, for each $x$, their restrictions $f_x$ and $g_x$ are holomorphic functions on $S_x$ having the same divisor $D_x$. Suppose that each $D_x$ is transverse to the boundary. Suppose that the unit $u=f/g$ always has positive real part. Then, for sufficiently small $\sigma$, the fibration over the torus $|x|=\rho, |\epsilon|=\sigma$  with fibers $f_x=\epsilon$ is isotopic to the fibration with fibers $g_x=\epsilon$.
\end{lemma}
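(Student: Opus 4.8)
\emph{Plan of proof.} The plan is to interpolate between $g$ and $f$ through functions that all have the same divisor, and then to carry the fibration along this interpolation by integrating a vector field, as in the proof of Ehresmann's fibration theorem. First I would construct the interpolation. The unit $u=f/g$ is continuous on the compact space $S$ and holomorphic on each $S_x$; since $\mathrm{Re}\,u>0$ throughout, its image lies in a compact subset of the right half-plane, where the principal branch $v=\log u$ is defined and holomorphic. Setting
$$f_t=g\,e^{tv}\qquad(0\le t\le 1),$$
we have $f_0=g$, $f_1=f$; each $f_t$ is continuous on $S$ and holomorphic on every $S_x$; the factor $e^{tv}$ is a nowhere-vanishing unit, and therefore $f_{t,x}$ has the same divisor $D_x$ as $g_x$ for all $t$ and $x$; moreover $\partial_t f_t=v\,f_t$.

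Next I would choose the radius uniformly. I claim there is a $\sigma_0>0$ such that for every $t\in[0,1]$, every $x$ with $|x|=\rho$, and every $\epsilon$ with $0<|\epsilon|\le\sigma_0$, the number $\epsilon$ is a regular value of $f_{t,x}$ and the level set $f_{t,x}^{-1}(\epsilon)$ is transverse to $\partial S_x$. This is the step I expect to be the main obstacle. Since $f_{t,x}$ and $g_x$ have the common divisor $D_x$, and $|f_{t,x}|$ is bounded below off any fixed neighbourhood of $\bigcup_x D_x$ (uniformly in $t$ and $x$, by compactness of $S$), for small $|\epsilon|$ the whole level set $f_{t,x}^{-1}(\epsilon)$ lies near $D_x$; covering a neighbourhood of $\bigcup_x D_x$ in the compact total space by finitely many families of charts adapted to $D$ and to $\partial S$ reduces the claim to a local statement. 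There one uses that the critical values of $f_{t,x}$ form a finite set (restricted to the preimage of a small disc $f_{t,x}$ is proper, so its critical-value set is analytic, hence discrete, hence finite), so they cannot accumulate at $0$; and that transversality of $D_x$ to $\partial S_x$ is an open condition which, in the local model, persists for the nearby level sets $f_{t,x}^{-1}(\epsilon)$ --- both uniformly in $t$ and $x$ because the parameter space $[0,1]\times\{|x|=\rho\}$ is compact. Fix such a $\sigma=\sigma_0$; any smaller $\sigma$ works as well.

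Finally I would transport the fibration. Let $q\colon S\to\{|x|=\rho\}$ be the family projection and put
$$W=\{(p,t)\in S\times[0,1]\ :\ |f_t(p)|=\sigma\},$$
with the map $W\to T=\{|x|=\rho\}\times\{|\epsilon|=\sigma\}$ sending $(p,t)\mapsto(q(p),f_t(p))$; for fixed $t$ the fibre over $(x,\epsilon)$ is $f_{t,x}^{-1}(\epsilon)$, so $W\cap\{t=0\}$ and $W\cap\{t=1\}$ are exactly the total spaces of the $g$-fibration and the $f$-fibration over $T$. On each $S_x$ the equation $df_{t,x}(\xi_p)=-v(p)\,f_t(p)$ (with $df_{t,x}$ the fibrewise differential) can be solved for a tangent vector $\xi_p$, because $\sigma$ is a regular value of $f_{t,x}$; taking the least-norm solution relative to a metric on the fibres of $q$ --- and, near $\partial S_x$, carrying this out inside $T\partial S_x$, which is possible by the boundary transversality just established, and then interpolating away from the boundary --- yields a continuous vector field $\xi=(\xi_p,\partial_t)$ on $W$, tangent to $W$ and to $\partial W$, whose flow preserves both $q(p)$ and $f_t(p)$. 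Compactness of $S$ makes this flow complete over $[0,1]$, and its time-one map is a homeomorphism $W\cap\{t=0\}\to W\cap\{t=1\}$ lying over $T$; the flow itself is the asserted isotopy of fibrations. (In the applications the family and the functions are smooth in $x$, so the flow is a smooth isotopy; the general case follows by approximation.) Lemma \ref{approx1} is the special case of a family over a point.
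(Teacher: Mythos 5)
Your overall strategy---interpolating from $g$ to $f$ through functions with the same divisor and then transporting the fibration by an Ehresmann-type vector field---parallels the paper's proof (which uses the linear interpolation $tf+(1-t)g$ rather than $g\,e^{tv}$). But the step you yourself flag as the main obstacle is where the lemma actually lives, and your argument for it has a genuine gap. You need one $\sigma_0>0$ valid for all $(t,x)\in[0,1]\times\{|x|=\rho\}$; equivalently, a punctured neighborhood of $D=\bigcup_x D_x$, independent of $t$ and $x$, containing no critical points of any $f_{t,x}$ and on which the level sets stay transverse to $\partial S_x$. The justification offered---for each fixed $(t,x)$ the critical values form a finite set, ``so they cannot accumulate at $0$,'' and this holds ``uniformly because the parameter space is compact''---is a non sequitur: per-parameter finiteness gives a positive gap $\sigma(t,x)$ around $0$, but nothing you say makes $(t,x)\mapsto\sigma(t,x)$ bounded below, and compactness of the parameter space alone does not prevent nonzero critical values from accumulating at $0$ as $(t,x)$ varies. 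Ruling this out is precisely the analytic content of the lemma, and it is the point at which the hypothesis $\mathrm{Re}\,u>0$ must be used; in your write-up that hypothesis enters only to define $\log u$, a sign that its essential role has been lost.

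Concretely, for your family $df_{t,x}=e^{tv}\bigl(dg_x+t\,g_x\,dv_x\bigr)$, so a critical point off $D_x$ satisfies $|dg_x|\le |g_x|\,|dv_x|$; what you must prove is that $|dg_x|>|g_x|\cdot\sup_S|dv|$ on some fixed punctured neighborhood of $D$ (a {\L}ojasiewicz-type gradient estimate for $g_x$ near its zero divisor, uniform over the family), together with the analogous uniform transversality statement near $D_x\cap\partial S_x$ --- where, since $D$ may be non-reduced, $dg$ vanishes along $D$ and ``transversality is an open condition'' cannot be invoked directly; the paper handles this by trivializing a neighborhood of $D_x\cap\partial S$ in $\partial S$ with disk fibers. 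The paper's interior argument is the curve-germ computation $\lambda=f'/g'=u+(g/g')u'$, showing that the ratio of gradients extends over $D$ with value $u$, so that $\mathrm{Re}\,u>0$ forbids vanishing of the interpolant's gradient near $D$; you need either that argument or a substitute estimate of the kind above. The remainder of your proof (the vector field solving $df_{t,x}(\xi_p)=-v\,f_t$, tangency to the boundary, completeness of the flow) is fine once the uniform regularity and transversality claims are actually established.
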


 \begin{proof}
Let $D$ be the union of the divisors $D_x$. We argue that in a punctured neighborhood  of $D$, the interpolation $F_t = tf + (1-t)g$ (with $0 \leq t \leq 1$) has a non-vanishing gradient (as does its restriction to the boundary). Then by the Ehresmann fibration theorem, $F_t$ provides a locally trivial fibration. 
\par
There is a neighborhood of $D$ on which, away from $D$ itself, the relative gradient $\nabla g$ does not vanish. Indeed, let $V$ be the variety on which $\nabla g$ vanishes. Then $g$ must be constant on each component of $V$, and each such component  either misses $D$ or is completely contained within it. Similarly, we claim that there is a (punctured) neighborhood of $D$ on which $\nabla f$ is never a negative multiple of $\nabla g$. To see this, consider the variety $V$ on which the two gradients are linearly dependent; note that $D$ is contained in $V$. Then the quotient $\lambda = \nabla f/ \nabla g$ is a well-defined analytic function on $V$ at least away from $D$. Suppose we have a map $\gamma: (C, p) \rightarrow V$ from a nonsingular curve germ, with $\gamma (p) \in D$. Then on $C$ we have 
$$\lambda = f'/g' = u + \frac{g}{g'}u'.$$
The quotient $g/g'$ has a removable singularity at $p$ and vanishes there. Thus $\lambda(p) = u(p)$. Since the curve $C$ is arbitrary, this shows that $\lambda$ is well-defined on $D$ and agrees with $u$ there. Thus there is a neighborhood of $V$ in which the real part of $\lambda$ cannot be negative; in the punctured neighborhood $\nabla F_t$ does not vanish.
\par
Finally, since each $D_x$ is transverse to the boundary, we can find a local trivialization of a neighborhood of $D_x \cap \partial S$ in $\partial S$, with fibers isomorphic to the complex disk. Then a similar argument as above applies to $f$ and $g$ restricted to the boundary.
\end{proof}


\section{Plane curves}\label{plcur}

Consider a germ at the origin of an irreducible analytic plane curve defined by $f(y,z)=0$; we will simply call it a ``curve.'' (For basic notions and facts about singular plane curves see \cite{BK} or \cite{Wall}.)
The {\em Milnor fiber} $F$ is the set of points $(y,z)$ obtained by the following process:
\begin{itemize}
\item[(1)]  
  requiring that  $\|(y,z)\|\leq\delta$, a sufficiently small radius,
\item[(2)]
  then requiring that  $f(y,z)=\epsilon$, a number sufficiently close to zero.
 \end{itemize}   
The boundary of the Milnor fiber is a link in the sphere.
Letting $\epsilon$ vary over a circle centered at 0 we obtain the {\em Milnor fibration} (which we will also call the {\em horizontal fibration}).
Let $h_q:H_q(F;\mathbf{Q}) \to H_q(F;\mathbf{Q})$ be the monodromy operator. 
The graded characteristic function 
$$
\HH(t)=\frac{\det(tI-h_0)}{\det(tI-h_1)}
$$
is called the {\em horizontal monodromy}. (In the literature it is sometimes called a {\em zeta function}.) Taking its degree computes the Euler characteristic $\chi$ of $F$.
\par
Assuming that the curve is not the axis $y=0$, there is a parametrization
$$
y=t^d, \quad z=\sum_{j} c_{j}t^{j},
$$
where the exponents are positive integers and all coefficients are nonzero. The integer $d$ (which we call the {\em degree}) is the number of sheets for the projection $\pi: (y,z) \mapsto y$, and over a slitted neighborhood of $0$ we may parametrize each sheet by
$$
\zeta=\sum_{j} c_{j}y^{j/d},
$$
having chosen one of the $d$ possible roots. We prefer to write this as follows:
\begin{equation}\label{puiseux}
\zeta=\sum c_{\mu}y^{\mu},
\end{equation}
where the sum is now over certain positive rational numbers with common denominator $d$ (arranged in increasing order); this is called the {\em Puiseux series} of the curve. One can recover $f$ by forming a product over all conjugates:
$$f(y,z) = \prod^d(z-\zeta).$$
(Note our notation for recording the number of conjugates.)
\par
An exponent of the Puiseux series is called {\em essential} (or {\em characteristic}) if its denominator does not divide the common denominator of the previous exponents. In particular (by the convention that the least common multiple of the empty set is 1) all integer exponents are inessential, but the first noninteger exponent is essential. Clearly there are only finitely many essential exponents $\mu_1 < \mu_2 < \dots < \mu_e$. The sum
\begin{equation}\label{prototype}
\sum_{i=1}^{e} y^{\mu_i}
\end{equation}
parametrizes the $d$ sheets of a singular curve which we call the {\em prototype}.

\begin{theorem} \label{protosame}
A curve and its prototype have the same horizontal monodromy.
\end{theorem}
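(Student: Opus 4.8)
The plan is to deform the given curve to its prototype through a family of irreducible curves all having the same characteristic exponents, and to show --- by an Ehresmann-type argument in the spirit of the approximation lemmas of Section~\ref{aplem} --- that the Milnor fibrations at the two ends of the deformation are smoothly isotopic; an isotopy intertwines the monodromy operators, so $\HH(t)$ is unchanged.

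Concretely, I would first organize the deformation using Puiseux data. Fix the common denominator $d$ and a finite set of exponents containing all those appearing in the curve together with the characteristic exponents $\mu_1 < \dots < \mu_e$. Attaching to each essential exponent a coefficient in $\mathbf{C}^{\ast}$ and to each remaining (inessential) exponent a coefficient in $\mathbf{C}$, every such tuple produces an irreducible curve of degree $d$ whose characteristic exponents are exactly $\mu_1,\dots,\mu_e$ (the relevant $\gcd$'s are controlled solely by the essential exponents, whose coefficients are nonzero). This parameter space is a product of connected spaces, hence connected, and the original curve and its prototype $\sum_{i=1}^e y^{\mu_i}$ are two of its points. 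I would choose a path $s \mapsto \zeta^{(s)}$, $0 \le s \le 1$, joining them, and set $f_s(y,z)=\prod^d(z-\zeta^{(s)})$, a holomorphic family of defining functions.

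Next I would assemble the total space. Because the path is equisingular --- constant characteristic exponents, hence (for an irreducible plane curve) constant Milnor number and constant semigroup --- one should be able to select a single Milnor radius $\delta$ and a single $\sigma$ valid for every $s$, so that $\{(y,z,s,\epsilon): \|(y,z)\|\le\delta,\ f_s(y,z)=\epsilon,\ |\epsilon|=\sigma,\ 0\le s\le 1\}$ is a compact manifold with corners mapping to $[0,1]\times\{|\epsilon|=\sigma\}$. Once one checks that each fiber $f_s=\epsilon$ meets the sphere $\|(y,z)\|=\delta$ transversely, this map and its restriction to the boundary are proper submersions, so by the Ehresmann fibration theorem the map is locally trivial; restriction to $s=0$ and $s=1$ then exhibits the Milnor fibration of the curve and that of its prototype as isotopic. (One may also note the classical shortcut, that $\HH(t)$ is determined by the embedded topological type and hence by the characteristic exponents; the argument just sketched is the explicit version we shall want to imitate in the surface setting.)

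The main obstacle is precisely the uniformity claim --- that one $\delta$ and one $\sigma$ work along the whole deformation, together with the transversality of the fibers to the boundary sphere --- since without it the total space need not be a manifold nor the projection a submersion. This is exactly the role played by the gradient estimates in the proof of Lemma~\ref{approx2}: on a punctured neighborhood of the total transform of the curve one shows that the relevant relative gradients never vanish and that the interpolated function has gradient pointing in an admissible direction, and near where the curve meets the boundary one trivializes by disks. Adapting those estimates to the present $s$-dependent family, rather than quoting a black-box equisingularity theorem, is where the real work lies.
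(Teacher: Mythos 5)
Your overall strategy (join the curve to its prototype by a path of branches with constant characteristic exponents, then argue that the Milnor fibrations at the two ends are isotopic) is legitimate in principle, and the parameter-space bookkeeping is fine: keeping the essential coefficients in $\mathbf{C}^{*}$ and letting the inessential ones vary in $\mathbf{C}$ does preserve the degree $d$ and the set of essential exponents. But the proof stops exactly where all the content lies. The claims that one radius $\delta$ and one $\sigma$ serve for every $s$, that each fiber $f_s=\epsilon$ meets the sphere $\|(y,z)\|=\delta$ transversely, and that the resulting map is a locally trivial fibration, taken together, \emph{are} the statement that a family of plane branches with constant characteristic exponents is topologically equisingular -- a genuine theorem (Zariski's equisingularity theory, or the $\mu$-constant theorem of L\^e--Ramanujam, which is available for plane curves), not a routine verification. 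You acknowledge this (``where the real work lies'') but do not supply it, and your suggestion that the estimates of Lemmas \ref{approx1}--\ref{approx2} can be adapted does not go through: those lemmas compare two functions having the \emph{same} divisor, and their proof rests entirely on the quotient $u=f/g$ extending across the common zero set with positive real part. Along your deformation, $f_s$ and $f_{s'}$ vanish on different curves, so an interpolation $tf_s+(1-t)f_{s'}$ has an uncontrolled zero locus and the gradient argument collapses. Thus as written the argument either silently invokes an equisingularity theorem as a black box (at which point the result is immediate, since $\HH(t)$ is an invariant of the embedded topological type determined by the characteristic exponents), or it is incomplete.

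For contrast, the paper never deforms the curve: Theorem \ref{protosame} is proved by induction on the number $e$ of essential exponents, simultaneously with the recursion of Theorem \ref{curverecursion}. One performs the single toric modification dictated by $\mu_1=n/m$ and splits the Milnor fiber along the boundary of a neighborhood $N$ of the rupture point. Outside $N$, Lemma \ref{approx1} applies legitimately because there $f$ and $f_1^{d/m}$ do have the same divisor and their quotient is close to $1$ along the exceptional set; inside $N$, after discarding unit factors (again by the approximation lemma), the piece is identified with part of the Milnor fiber of the derived curve, which has $e-1$ essential exponents and is handled by the inductive hypothesis. The dependence on the inessential coefficients is eliminated only through these same-divisor comparisons; that is precisely the mechanism your deformation argument lacks, and replacing it would require proving, not just asserting, the topological triviality of your family.
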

For example, if there are no essential exponents then the curve is nonsingular at the origin, its prototype is $z=0$, and the horizontal monodromy is $t-1$.
We will prove Theorem \ref{protosame} by induction on $e$, at the same time that we prove a set of recursive formulas. 
To this end, we define the {\em truncation} of a singular curve with prototype
$$\sum_{i=1}^{e}y^{\mu_i}$$
to be the curve with Puiseux series
$$\zeta_1=y^{\mu_1}=y^{n/m}$$
(where the second equation defines the relatively prime integers $m$ and $n$).
Its {\em derived curve} is the curve with Puiseux series
$$\zeta'=\sum_{i=1}^{e-1}y^{\mu'_i},$$ 
with the new exponents computed by
$$\mu'_i=m(\mu_{i+1}-\mu_1+n).$$
(An example is worked out at the end of this section.)
Let $d_1$ and $d'$ denote the degrees of the truncation and the derived curve, respectively. Similarly, let $\chi_1$ and $\chi'$ denote the Euler characteristics of their Milnor fibers; let $\HH_1$ and $\HH'$ denote their horizontal monodromies.
\begin{theorem}\label{curverecursion}
The degree, Euler characteristic, and horizontal monodromy are determined by these formulas.
\begin{enumerate}
\item
$d_1=m$
\item
$d=d_1 d'$
\item
$
\chi_1=m+n-mn
$
\item
$
\chi=d'(\chi_1-1)+\chi'
$
\item
$$
\HH_1(t)=
\frac{(t^{m}-1)(t^{n}-1)}
{t^{mn}-1}
$$
 \item
$$\HH(t)=
\frac{\HH_1(t^{d'})\cdot \HH'(t)}
{t^{d'}-1}$$
\end{enumerate}
\end{theorem}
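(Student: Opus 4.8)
The plan is to prove Theorem~\ref{curverecursion} and Theorem~\ref{protosame} simultaneously by induction on the number $e$ of essential exponents, the base case $e=0$ being the smooth curve treated above. In the inductive step the derived curve $C'$ has $e-1$ essential exponents, so the inductive hypothesis supplies all six formulas, together with Theorem~\ref{protosame}, for $C'$; I will use this to establish the formulas for a curve $C$ with $e$ essential exponents. Theorem~\ref{protosame} for $C$ then follows formally: a curve and its prototype have the same first characteristic pair (hence the same $m$, $n$, $d_1$, and truncation), and---after checking that the exponents $\mu'_i=m(\mu_{i+1}-\mu_1+n)$ really are the essential exponents of $\zeta'$---the derived curve of the prototype of $C$ is the prototype of $C'$; so $C$ and its prototype feed the same data $(m,n,d',\HH_1,\HH')$ into formulas~(2),(4),(6), which forces $d$, $\chi$, and $\HH$ to agree.

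Two ingredients are elementary and not the obstacle. Formula~(1) holds by definition of $m$; formula~(2), $d=d_1 d'$, together with the fact that $C'$ has one fewer essential exponent, follows from standard bookkeeping with the chain of denominators of the characteristic exponents, the substitution $\mu'_i=m(\mu_{i+1}-\mu_1+n)$ being precisely the one that clears the first denominator while keeping the shift integral. For the truncation, $C_1$ is the curve $z^m-y^n=0$ with $\gcd(m,n)=1$, so it is irreducible, $F_1$ is connected, and the monodromy of this Brieskorn--Pham polynomial is classical: $h_0=1$ on $H_0$, and on $H_1$ the eigenvalues are the products $\zeta_m^a\zeta_n^b$ with $1\le a\le m-1$, $1\le b\le n-1$, where $\zeta_m$, $\zeta_n$ are primitive $m$th and $n$th roots of unity. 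Since $\gcd(m,n)=1$ these products run, as $(a,b)$ ranges over $\ZZ/m\times\ZZ/n$, exactly once over all $mn$-th roots of unity; separating the cases $a=0$, $b=0$, and $a,b\ne0$ gives
$$\det(tI-h_1)=\frac{(t^{mn}-1)(t-1)}{(t^m-1)(t^n-1)},$$
which is formula~(5), and formula~(3) then follows either by taking degrees or from $\chi_1=1-(m-1)(n-1)$.

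The heart of the matter is the recursion~(4) and~(6). Since $\chi=\deg\HH$ (and likewise for $\chi_1,\chi'$), formula~(4) is exactly the degree of formula~(6); one may also prove it first on its own, via additivity of Euler characteristics, as a warm-up. For~(6) the idea is to exhibit $F$ as a $d'$-fold cover of $F_1$, branched only in a small region. Near the origin the $d=md'$ conjugates of $\zeta$ cluster into $m$ groups of $d'$, one group accumulating on each of the $m$ sheets $z=\zeta_m^k y^{n/m}$ of $C_1$; away from a thin tube $T$ about $C_1$ the ratio $f/f_1^{d'}$ tends to $1$, and both functions have empty divisor there, so Lemma~\ref{approx1} replaces the fibration $\{f=\epsilon\}$ by $\{f_1^{d'}=\epsilon\}$, which is $d'$ disjoint copies of the corresponding part of $F_1$ cyclically permuted by the monodromy---an operator whose $d'$-th power is the monodromy of $f_1$, and which therefore contributes the factor $\HH_1(t^{d'})$. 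Inside $T$ one performs the Newton substitution $y=Y^m$, recenters $z$ on a sheet $z=Y^n$, and rescales; by construction this converts the tail $\sum_{i\ge2}y^{\mu_i}$ of the Puiseux series into $\sum_{i=1}^{e-1}y^{\mu'_i}$, so that---again after an application of Lemma~\ref{approx1}, using the inductive form of Theorem~\ref{protosame} at depth $e-1$ to pass to the prototype---this part of $F$ is identified with the corresponding part of $F'$, contributing $\HH'(t)$. Gluing the two pieces by Mayer--Vietoris (equivalently, multiplicativity of the graded characteristic function), the interface along $\partial T$---a union of $d'$ tubular pieces cyclically permuted by the monodromy---produces precisely the correction $t^{d'}-1$ in the denominator of~(6), and correspondingly the $-d'$ that converts $d'\chi_1$ into $d'(\chi_1-1)$ in~(4).

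The main obstacle is this last bookkeeping. One must verify that the Newton substitution followed by the approximation lemma identifies the inside-the-tube part of $F$ with a piece of $F'$ \emph{as a fibration over the $\epsilon$-circle}, not merely fiber by fiber, and that the interface contributes exactly the factor $t^{d'}-1$ and no other cyclotomic factor---equivalently, that the cover $F\to F_1$ is unbranched over all of $F_1$ except over one suitably chosen disk, the preimage of whose boundary is $d'$ cyclically permuted circles. The clustering claim in the outer region, and the assertion that the monodromy genuinely permutes the $d'$ sheets cyclically while realizing the monodromy of $f_1$ on the $d'$-th return, also rest on the estimates underlying Lemma~\ref{approx1}, but are comparatively straightforward.
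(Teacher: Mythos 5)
Your overall architecture is the paper's: simultaneous induction with Theorem~\ref{protosame}, an explicit computation for the truncation (your Brieskorn--Pham eigenvalue count is a legitimate substitute for the paper's retraction onto the complete bigraph, and gives (3) and (5) correctly), a decomposition of $F$ into the part inside a shrinking tube around $C_1$ and its complement, the approximation lemma outside, and the substitution $y=Y^m$ with recentering and rescaling inside --- which is literally the paper's composite of the toric chart $y=u^mv^r$, $z=u^nv^s$ with the map $u=y'/(w+1)^r$, $v=(w+1)^m$, i.e.\ $y=(y')^m$, $z=(y')^n(1+w)$. But the step you yourself flag as ``the main obstacle'' is resolved incorrectly. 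The interface $F\cap\partial T$ is a union of $d'$ cyclically permuted circles; circles have zero Euler characteristic, so in any equivariant Mayer--Vietoris or multiplicativity argument for the graded characteristic function they contribute a factor $1$, not $t^{d'}-1$. If the inside piece really contributed $\HH'(t)$, your gluing would yield $\HH(t)=\HH_1(t^{d'})\,\HH'(t)$ with no denominator. The actual source of the denominator (this is the paper's computation) is that the rescaling $(y',w)\mapsto(y',(y')^{nm}w)$ identifies the inside piece not with $F'$ but with $F'$ minus $d'$ disks, centered at the $d'$ points of $F'$ lying over $y'=0$ (the points $(0,\epsilon^{m/d})$), and these disks are cyclically permuted by the monodromy; this is also where the $-d'$ in formula (4) comes from. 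Your proposed ``equivalent'' criterion --- a $d'$-fold cover $F\to F_1$ branched over one disk --- is not the right picture either: there is no such map; rather $F$ is assembled from $d'$ copies of (something homotopy equivalent to) $F_1$ and one copy of $F'$ with $d'$ cyclically permuted disks removed.

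A second genuine soft spot is your outer-region step: you invoke Lemma~\ref{approx1} while observing that ``both functions have empty divisor there.'' With empty divisor the lemma is vacuous --- its proof only controls the interpolation near the common divisor --- and the difficulty is real, because the central part of the fiber $f=\epsilon$ sits arbitrarily close to the origin, where your tube pinches, so one needs uniform positivity of the real part of $f/f_1^{d'}$ along \emph{all} directions of approach to $0$. This is exactly why the paper performs the first toric/blowup step before applying the lemma: upstairs, $f$ and $f_1^{d/m}$ share the exceptional curves (with multiplicity) as a common divisor on the outer region, the ratio is checked to equal $1$ along every exceptional component away from the rupture point, and only then does Lemma~\ref{approx1} apply. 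One also needs the bigraph retraction (or a substitute) to see that each of the $d'$ outer components --- which is $F_1(\eta)$ with its collar inside the tube removed --- still has the homotopy type of the full $F_1$, so that the outer contribution is exactly $\HH_1(t^{d'})$ with no further correction. With these two repairs your sketch becomes the paper's proof.
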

Before embarking on the proof, we describe its key idea. As is well known, one may obtain an embedded resolution of a curve singularity by a resolution process whose steps are dictated by the Puiseux exponents, and from such a resolution one can compute the monodromy by invoking a formula of A'Campo \cite{AC}. Our proof does not use this full process of resolution, but just the first step of it: the toric transformation prescribed by the leading exponent. In general the strict transform that we obtain is still highly singular. We strip away all of the exceptional divisors except for the sole divisor meeting the strict transform, called the ``rupture component.'' We then observe that the remaining configuration, consisting of the strict transform together with the rupture component, can be blown down in a certain way so as to obtain a new singular curve. This is the derived curve. Other authors have also used this idea of partial resolution, e.g. \cite{GLM}.
\begin{proof} As indicated, we will simultaneously provide an inductive proof of Theorem \ref{protosame} (inducting on the number of essential exponents) and a recursive proof of Theorem \ref{curverecursion}.
\par
The Milnor fiber of the truncation, which is defined by $z^m-y^n=\epsilon$, is projected by $\pi$ onto a neighborhood of $0$ on the $y$-line, with total ramification above the $n$th roots of $-\epsilon$. This neighborhood can be retracted onto the union $L$ of line segments from $0$ to these points, in such a way that there is a compatible retraction of the Milnor fiber onto $\pi^{-1}L$, which is the complete bigraph on the $n$ points $((-\epsilon)^{1/n},0)$ and the $m$ points $(0,\epsilon^{1/m})$. As $\epsilon$ goes around a circle, each set of points is cyclically permuted. Since $m$ and $n$ are relatively prime, the $mn$ edges of the graph are likewise cyclically permuted. Thus the odd-numbered formulas are confirmed.
\par
To verify the recursive formulas and to handle the inductive step in the proof of Theorem \ref{protosame}, suppose we are given a curve with Puiseux series (\ref{puiseux}) and prototype (\ref{prototype}).
We first replace
$$
\frac{z-\sum_{\mu\in\ZZ} c_{\mu}y^{\mu}}{c_{\mu_1}}.
$$
by $z$.
In the new coordinate system, the curve is defined by the vanishing of
$$
f=\prod^d\left(z-\left[y^{n/m}+\sum_{\mu>n/m} c_{\mu}y^{\mu}\right]\right),
$$
(where for simplicity the coefficients have been renamed).
The truncation is defined by the vanishing of 
$$
f_1= \prod^m(z-y^{n/m})=z^m-y^n.
$$
Note that $m$ divides $d$, and that, as we vary the $d$th root of $y$, each value of $y^{1/m}$ occurs $d/m$ times.
Thus
\begin{equation}\label{compare}
\frac{f}{f_1^{d/m}}= \prod^d\left(1-\frac{\sum_{\mu>n/m} c_{\mu}y^{\mu}}{z-y^{n/m}}\right).
\end{equation}
\par
One can obtain an embedded resolution of the truncation by a sequence of blowups dictated by its exponent $\mu_1=n/m$ and the Euclidean algorithm. The total transform will consist of a chain of exceptional divisors occurring with certain multiplicities, together with a strict transform meeting just one such exceptional divisor, which we call the {\em rupture component}. Along this chain the function $z^m/y^n$ has no indeterminacy, and in fact except along the rupture component its value is either $0$ or $\infty$. In either case one immediately verifies that the value of (\ref{compare}) is 1.
\par
To work in a chart containing the rupture component, we use substitutions dictated by the matrix
$$ \left[ \begin{array}{cc} 
	m  & n  \\
	r & s
\end{array} \right],$$
where $r$ and $s$ are the smallest positive integers for which the determinant is 1,
namely
\begin{gather*}
y=u^{m} v^{r} \\
z=u^{n} v^{s}.
\end{gather*}
We find that in this chart the total transform of the truncation is defined by the vanishing of
$$
f_1=u^{mn}v^{rn}(v-1),
$$
and its strict transform is defined by the vanishing of the last factor. Note that it meets the $v$-axis at the point 
$(u,v)=(0,1)$.
The total transform of the given curve is defined by the vanishing of
$$
f=\prod^d\left(u^{n}v^{s}-\left[u^{n}v^{rn/m}+\sum_{\mu>n/m} c_{\mu}u^{m\mu}v^{r\mu}\right]\right)
$$
which may be rewritten as
\begin{equation}\label{rewritten}
f=u^{nd}v^{rnd/m}\prod^d\left(v^{1/m}-\left[1+\sum_{\mu>n/m} c_{\mu}u^{m\mu-n}v^{r(m\mu-n)/m}\right]\right).
\end{equation}
The strict transform is defined by the vanishing of the last $d$ factors, and again it meets the $v$-axis at $(0,1)$.
Note that
$$
\frac{f}{f_1^{d/m}}= \prod^d\left(1-\frac{\sum_{\mu>n/m} c_{\mu}u^{m\mu-n}v^{r(m\mu-n)/m}}{v^{1/m}-1}\right),
$$
which is indeterminate at $(0,1)$ but whose value elsewhere on the rupture component is 1.
\par
Introducing two new variables $y'$ and $w$, let $B$ denote a small ball $\|(y',w)\|\leq\delta'$ centered at the origin, and map it to a neighborhood $N$ of $(u,v)=(0,1)$ by letting $u=\frac{y'}{(w+1)^r}$ and $v=(w+1)^m$. Note that this map is nonsingular at the origin. When pulled back via this map, just one of the values $v^{1/m}$  becomes $w+1$. Thus $d/m$ of the factors at the end of (\ref{rewritten}) become
$$
w-\sum_{\mu>n/m} c_{\mu}(y')^{m\mu-n},
$$
whereas the remaining $d-d/m$ factors become units.
\par
We can regard the Milnor fiber of our original curve as a subset of the surface obtained by the sequence of blowups. Let us assume that the choices of $\delta$ and $\epsilon$ made in defining the Milnor fiber are made subsequent to the choice of $\delta'$. We claim that by choosing $\delta$ sufficiently small we can guarantee that the strict transform of the original curve germ lies entirely within $N$. Indeed, 
we note that on the strict transform
\begin{equation*}
v^{1/m}=1+\sum_{\mu>n/m} c_{\mu}y^{\mu-n/m}
\end{equation*}
(for some choice of conjugate). Thus we can force $v$ to be arbitrarily close to 1 by choosing $\delta$ sufficiently small, and since $u^m=y/v^r$ we can likewise force $u$ arbitrarily close to 0.
Then by appropriate choice of $\epsilon$ we can arrange that the Milnor fiber of our curve is transverse to the boundary of $N$, and that its boundary lies completely within $N$. Our Milnor fiber is thus divided into two pieces. (See Figure \ref{twopieces}.)
\par
\begin{figure}
\scalebox{0.25}
{\includegraphics{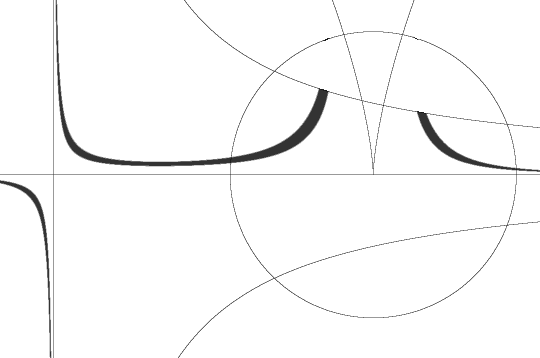}}
\caption{The Milnor fiber (the thickened curve) is divided into two pieces by the boundary of $N$ (indicated by a circle). The rupture component is horizontal, and another exceptional divisor is shown vertically. The strict transform enters from above.}
\label{twopieces}
\end{figure}
\par
Consider first the piece of the Milnor fiber lying outside of $N$. Having excluded the points of indeterminacy of $f/f_1^{d/m}$, we may apply the approximation lemma \ref{approx1} to conclude that the monodromy of $f$ is the same as the monodromy of $f_1^{d/m}$. 
The Milnor fiber has $d/m$ connected components corresponding to all possible values of $\epsilon^{m/d}$, and each one is a copy of the Milnor fiber for $f_1$. Fixing one such value $\eta$, we see as above that the corresponding component can be contracted onto the complete bigraph on the $n$ points $((-\eta)^{1/n},0)$ and the $m$ points $(0,\eta^{1/m})$. As $\epsilon$ goes around a circle the values of $\epsilon^{m/d}$ are cyclically permuted; thus the components are likewise permuted. As $\epsilon$ goes around this circle $d/m$ times, however, each $\eta$ goes once around a circle. Thus the monodromy of this piece is $\HH_1(t^{d/m})$.
\par
Now consider the piece of the Milnor fiber lying inside $N$. Note that it has two sorts of boundary components: the components of the original link $L$ and those components created by its intersection with the boundary sphere of $N$. To analyze it, we look at its inverse image in the ball $B$.
By the approximation lemma \ref{approx1}, we may ignore all unit factors in $f$. Thus we may assume that the function defining this piece of the Milnor fiber is
$$
(y')^{nd}\prod^{d/m}\left(w-\sum_{\mu>n/m} c_{\mu}(y')^{m\mu-n}\right).
$$
The map $(y',w) \mapsto (y',(y')^{nm}w)$ takes this piece to the Milnor fiber of the curve with Puiseux series
\begin{equation}\label{derivedseries}
\sum_{\mu>n/m} c_{\mu}(y')^{m\mu-n+nm},
\end{equation}
but it misses disks centered at the $d/m$ points $(0,\epsilon^{m/d})$. Note that these disks are cyclically permuted by the monodromy. In (\ref{derivedseries}) there are $e-1$ essential terms, whereas our original Puiseux series had $e$ essential terms. By the inductive hypothesis, the monodromy of this curve is the same as that of its prototype, which has Puiseux series
$$
\sum_{i=2}^{e}(y')^{m(\mu_{i}-\mu_1+n)};
$$
by reindexing we obtain the Puiseux series of the derived curve. Thus $d'=d/m$, confirming formula (2) of the theorem, and the monodromy of this piece of the Milnor fiber is
$$
\frac{\HH'(t)}
{t^{d'}-1}.
$$
Combining this with our conclusion about the monodromy of the first piece, we obtain formula (6). Finally we obtain formula (4) by computing the degree of both sides of (6). 
\end{proof}
Here is an example. Suppose we begin with the curve whose Puiseux series is
 $$
 \zeta=y^{3/2}+y^{7/4}+y^{11/6}.
 $$
Then its truncation is parametrized by $\zeta_1=y^{3/2}$, and its derived curve is parametrized by 
$$
 \zeta'=y^{13/2}+y^{20/3}.
 $$
 Repeating this process, we obtain truncation $\zeta'_1=y^{13/2}$ and second derived curve
 $$
 \zeta''=y^{79/3}.
 $$
By repeated use of the first two formulas in Theorem \ref{curverecursion}, we have $d=2d'=4d''=12$.
By formulas (3) and (4), the Euler characteristic of the Milnor fiber is
$$
\chi=d'(\chi_1-1)+d''(\chi'_1-1)+\chi''=6(-2)+3(-12)+(-155)=-203.
$$
By formulas (5) and (6), the horizontal monodromy is
$$
\HH(t)=
\frac{\HH_1(t^{d'})}{t^{d'}-1}
\cdot \frac{\HH_1(t^{d''})}{t^{d''}-1}
\cdot \HH''(t)
=\frac{(t^{12}-1)(t^{18}-1)(t^{39}-1)(t^{79}-1)}{(t^{36}-1)(t^{78}-1)(t^{237}-1)}.
$$


\section{Quasi-ordinary surfaces}\label{qos}

We now turn to quasi-ordinary surfaces, beginning with a compressed account of the essential facts and definitions. A reader seeking more information should consult \cite {Li0, Li1, Li2}.
\par
We suppose that $S$ is a germ at the origin of an irreducible analytic surface defined by the vanishing of a function $f(x,y,z)$.
The quasi-ordinary condition means that we can arrange a projection  $\pi: (x,y,z) \mapsto (x,y)$ so that $\pi|_S$ has discriminant locus contained in the coordinate axes $xy = 0$. In particular $\pi|_S$ is a finite covering space map on the complement of the axes. It is known that $S$ has many curve-like properties. Foremost among them is the existence of a fractional-exponent power series
\begin{equation}\label{powseries}
\zeta(x,y) = \sum c_{\lambda\mu} x^{\lambda} y^{\mu}
\end{equation}  
which parametrizes $S$ via $(x,y) \mapsto (x,y, \zeta(x,y))$, where we vary the conjugate of $\zeta$ so as to obtain the various sheets of the cover. The exponents can all be taken to have a common denominator, and we write only those terms in which $c_{\lambda\mu}\neq 0$. One can recover $f$ by forming a product over all conjugates:
$$
f(x,y,z) = \prod^d(z-\zeta(x,y)).
$$
(Here $d$ denotes the number of conjugates and thus the number of sheets.)
\par
Define an ordering on pairs of exponents as follows: we say that $(\lambda,\mu)<(\lambda^{*},\mu^{*})$ if $\lambda\leq\lambda^{*}$, $\mu\leq\mu^{*}$, and they are not the same pair. The restriction on the discriminant locus implies that among the exponent pairs of (\ref{powseries}) we may find a finite sequence of {\em characteristic pairs}
\begin{equation}\label{cpairs}
(\lambda_1,\mu_1)<(\lambda_2,\mu_2)<\cdots<(\lambda_e,\mu_e)
\end{equation}
with these properties:
\begin{enumerate}
\item
Each $(\lambda_i,\mu_i)$ is not contained in the subgroup of $\QQ\times\QQ$ generated by $\ZZ\times\ZZ$ and by the previous characteristic pairs.
\item
If $(\lambda,\mu)$ is a noncharacteristic pair, then it is contained in the subgroup generated by those characteristic pairs for which $(\lambda_i,\mu_i)<(\lambda,\mu)$.
\end{enumerate}
\par
We say that $S$ is {\em reduced} (as a quasi-ordinary surface) if $\mu_1 \neq 0$. In this case, one immediately verifies that the singular locus of $S$ is contained in the pair of coordinate axes in the $x$-$y$ plane. For such a surface we define the {\em Milnor fiber of a transverse slice} to be the set of points $(x,y,z)$ obtained by the following process:
\begin{itemize}
 \item[(1)]  
  requiring that  $\|(x,y,z)\|\leq\delta$, a sufficiently small radius,
 \item[(2)]  
  then requiring that $x$ be a fixed number sufficiently close to zero,
\item[(3)]
  then requiring that  $f(x,y,z)=\epsilon$, a number sufficiently close to zero.
 \end{itemize}  
Denote this transverse Milnor fiber by $F$ and its Euler characteristic by $\chi$.
We should point out a subtlety in the definition: the tranverse slice (obtained by the first two steps but then staying on the surface $f=0$) may be a plane curve with several branches. For example, the transverse slice of $z^2=x^{3}y^{2}$ is a pair of lines, and thus its transverse Milnor fiber has two boundary components.
\par
By keeping $x$ fixed but letting $\epsilon$ vary over a circle centered at 0, we obtain the {\em horizontal fibration}.
Keeping $\epsilon$ fixed but letting $x$ vary over a circle centered at 0, we obtain the {\em vertical fibration}.
Thus we have a fibration over a torus.
Let $h_q:H_q(F;\mathbf{Q}) \to H_q(F;\mathbf{Q})$ 
and $v_q:H_q(F;\mathbf{Q}) \to H_q(F;\mathbf{Q})$
be the respective monodromy operators. The graded characteristic functions
$$
\HH(t)=\frac{\det(tI-h_0)}{\det(tI-h_1)}
\qquad
\text{and}
\qquad
\VV(t)=\frac{\det(tI-v_0)}{\det(tI-v_1)}
$$
are called the {\em horizontal monodromy} and {\em vertical monodromy}.
\par
For a non-reduced quasi-ordinary surface, the definitions of horizontal and vertical monodromy need to be formulated in a slightly different way. We discuss this case in the last section of the paper. In all circumstances our definitions agree with those of Kulikov \cite{Ku}, p. 137
(except in those cases where the surface is not singular along or above the $x$-axis, in which case our formulas yield trivial monodromy).

\section{Recursive formulas for horizontal and vertical monodromy}
\label{rfhvm}
Suppose we begin with a series (\ref{powseries}) defining the germ at the origin of an irreducible quasi-ordinary surface $S$.
As in the case of plane curves, we create a new series using just the characteristic pairs,
\begin{equation}\label{prototype2}
\sum_{i=1}^{e} x^{\lambda_i} y^{\mu_i},
\end{equation}
and call the corresponding surface the  {\em prototype}.
\begin{theorem}\label{protosame2}
A reduced quasi-ordinary surface and its prototype have the same horizontal monodromy and the same vertical monodromy.
\end{theorem}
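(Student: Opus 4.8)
The plan is to mimic the strategy used for Theorem~\ref{protosame}, replacing the single Puiseux series by the two-variable series~(\ref{powseries}) and carrying out the first toric modification dictated by the leading characteristic pair $(\lambda_1,\mu_1)$. As in the curve case, I would treat the surface, its truncation $S_1$ (obtained by keeping only the term $x^{\lambda_1}y^{\mu_1}$), and its derived surface $S'$ simultaneously, proving the statement by induction on the number $e$ of characteristic pairs. The base case $e=0$ is the smooth (or nonsingular-along-the-axes) situation, where both monodromies are trivial for $S$ and its prototype alike. For the inductive step, I would first normalize coordinates by subtracting off all the terms $x^\lambda y^\mu$ with integer-plus-previously-attainable exponents and rescaling, so that $f=\prod^d\bigl(z-[x^{\lambda_1}y^{\mu_1}+\text{higher}]\bigr)$ and $f_1=\prod^{d_1}\bigl(z-x^{\lambda_1}y^{\mu_1}\bigr)$, exactly paralleling the passage to~(\ref{compare}).

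The key geometric step is the toric transformation. Write $\lambda_1=a/m$, $\mu_1=b/m$ with the appropriate relatively prime data, and perform the monomial substitution coming from a unimodular matrix completing $(m,a,b)$ (or, more precisely, the weighted blowup adapted to the leading monomial), so that in the relevant chart the total transform of $f_1$ becomes a monomial times $(v-1)$, with the strict transform meeting the ``rupture divisor'' at a single point. Just as in Section~\ref{plcur}, the quotient $f/f_1^{d/m}$ is then a unit on the rupture component away from that one point of indeterminacy. I would introduce a small polydisc $B$ mapping nonsingularly onto a neighborhood $N$ of that point; outside $N$, Lemma~\ref{approx2} (this is where we genuinely need the \emph{family} version, since $x$ now varies over a circle as well) lets us replace $f$ by $f_1^{d/m}$, and the transverse Milnor fiber there retracts onto a bigraph-like configuration exactly as before, contributing $\HH_1(t^{d'})/(t^{d'}-1)$ to the horizontal monodromy and, crucially, a controlled contribution to the vertical monodromy governed by how the roots $x^{a/m}$ are permuted as $x$ loops the origin. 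Inside $N$, after clearing units via Lemma~\ref{approx1} (applied fiberwise) and an explicit monomial rescaling in $B$, the fiber becomes the transverse Milnor fiber of the derived surface $S'$ punctured along $d'$ disks; by the inductive hypothesis its monodromy matches that of the prototype of $S'$, which one checks has precisely the characteristic pairs of~(\ref{prototype2}) reindexed. Gluing the two pieces along the circles cut out by $\partial N$ (using a Mayer--Vietoris argument on $H_0$ and $H_1$, noting the gluing circles are permuted cyclically) assembles $\HH$ and $\VV$ from $\HH_1,\HH',\VV_1,\VV'$; since $S_1$ and $S'$ and their prototypes have matching monodromies by induction and by the truncation being its own prototype, the prototype of $S$ inherits the same $\HH$ and $\VV$.

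The main obstacle, and the place where genuinely new work beyond Section~\ref{plcur} is required, is the bookkeeping for the \emph{vertical} direction. In the curve case there was no $x$, so Lemma~\ref{approx2} was never invoked and there was no vertical monodromy to track; here one must check that the interpolation and the retractions can all be carried out \emph{continuously in $x$} over the circle $|x|=\rho$, that the point of indeterminacy and the neighborhood $N$ move continuously (so that $B\to N$ is a family of embeddings), and — most delicately — that the monomial change of coordinates $u=y'/(w+1)^r$, $v=(w+1)^m$ and the subsequent rescaling interact correctly with the $x$-monodromy, so that the vertical monodromy of the derived surface appears with the right twist $t\mapsto t^{?}$ coming from the exponent $m\lambda_1$ (analogous to the $t^{d'}$ twist in the horizontal formula). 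I expect the cleanest route is to phrase everything in terms of the fibration over the torus $|x|=\rho,\ |\epsilon|=\sigma$ from the outset, so that horizontal and vertical data are handled uniformly and the two approximation lemmas do all the isotopy work, leaving only the combinatorics of permuted components and the inductive identification of $S'$ with its prototype.
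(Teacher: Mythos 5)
Your overall strategy is the same as the paper's: induct on the number of characteristic pairs, normalize coordinates, perform the modification attached to the leading exponent equivariantly over the circle $|x|=\rho$, split the transverse Milnor fiber into a piece outside and a piece inside a neighborhood $N$ of where the strict transform meets the rupture component, treat the outside piece with Lemma \ref{approx2} and the inside piece by clearing units and rescaling so that the derived surface appears, then invoke the inductive hypothesis. But there is a concrete gap exactly at the point you flag as ``most delicate,'' and it is not merely bookkeeping. Writing $\lambda_1=a/(mb)$, $\mu_1=n/m$ with $a,b$ and $m,n$ relatively prime, the blowups are dictated by $\mu_1$ alone and performed in each slice $x=\mathrm{const}$; the strict transform of the truncation then meets the $v$-axis not in a single point but in $b$ points, which sweep out the torus knot $v^b=x^a$ as $x$ runs around the circle. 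So $N$ cannot be a ball around one continuously moving indeterminacy point, as you propose: it must be a tubular neighborhood of this knot, parametrized from $B=\{|x'|=\rho^{1/b}\}\times\{\|(y',w)\|\le\delta'\}$ via $x=(x')^b$, $v=(w+1)^m(x')^a$, $u=y'/((w+1)^r\rho^{ar/(mb)})$. This is precisely where the $b$-dependence of the recursion comes from: the vertical monodromy cyclically permutes the $b$ balls in each slice and its $b$-th power acts by the vertical monodromy of the derived surface, giving $\VV'(t^b)/(t^b-1)^{d'}$, while the inside piece contributes $\bigl(\HH'(t)/(t^{d'}-1)\bigr)^b$ horizontally. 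Your guessed twist ``coming from the exponent $m\lambda_1$'' is left unresolved, and with the single-point picture you would reproduce the curve-case formulas with no powers of $b$, which are wrong whenever $b>1$.

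Two further places where ``exactly paralleling'' the curve case fails and extra work is required. First, several terms of the series may share the $y$-exponent $\mu_1$ (with larger $\lambda$), so $f/f_1^{d/(mb)}$ is not identically $1$ along the non-rupture exceptional components, nor is it $1$ on the rupture component away from one point; one only gets positive real part after shrinking $|x|$, which is why the approximation lemmas are stated with that hypothesis, and why the strict transform meets the $v$-axis at the clustered points of (\ref{clustered}) near the knot rather than on it. Second, even the product identity (\ref{compare2}) for $f/f_1^{d/(mb)}$ needs justification: unlike the curve case, where each $m$-th root of $y$ occurs $d/m$ times among the conjugates, here one needs the coset-counting argument via the homomorphism $(\mu_{d_x}\times\mu_{d_y})/K\to(\mu_{d_x}\times\mu_{d_y})/K_1$, all of whose fibers have cardinality $d/(mb)$. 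Finally, if by ``weighted blowup adapted to the leading monomial'' you mean a three-variable toric modification involving $x$, that would disturb the slice-by-slice structure on which the vertical fibration is defined; the paper's modification touches only $(y,z)$ and is carried continuously along the circle in $x$, which is what makes Lemma \ref{approx2} applicable.
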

We will establish this as in the case of plane curves: by induction on $e$, while simultaneously proving a set of recursive formulas. The case $e=0$ is trivial, and henceforth we assume that $e>0$.
We define the {\em truncation} to be the surface $S_1$ determined by
$$
\zeta_1=x^{\lambda_1}y^{\mu_1}=x^{\frac{a}{mb}}y^{\frac{n}{m}},
$$
where $n$ and $m$ are relatively prime, as are $a$ and $b$.
\par
As before, let $r$ and $s$ be the smallest nonnegative integers so that
$$ \left[ \begin{array}{cc} 
	m  & n  \\
	r & s
\end{array} \right]$$
has determinant 1.
The {\em derived surface} is the surface $S'$ determined by
$$\zeta'=\sum_{i=1}^{e-1}x^{\lambda'_i}y^{\mu'_i},$$ where the new exponents are computed by these formulas:
\begin{align*}
\mu_i' &= m(\mu_{i + 1} - \mu_1 + mb\mu_1) \\
\lambda_i' &= b(\lambda_{i + 1} - \lambda_1 + mb \lambda_1 + r \mu_i' \lambda_1).
\end{align*}
(An example is worked out at the end of this section.)
\par
For the truncation, let $d_1$, $\chi_1$, $\HH_1$, and $\VV_1$ denote its degree, the Euler characteristic of its transverse Milnor fiber, and its horizontal and vertical monodromies. Let  $d'$, $\chi'$, $\HH'$, and $\VV'$ denote the same things for the derived surface. Let $(n,a)$ denote the greatest common divisor.
\begin{theorem}\label{surfacerecursion}
For a reduced quasi-ordinary surface germ, its degree, the Euler characteristic of its transverse Milnor fiber, its horizontal monodromy, and its vertical monodromy are determined by these formulas.
\begin{enumerate}
\item
$d_1=mb$
\item
$d=d_1 d'$
\item
$
\chi_1=mb+nb-mnb^2
$
\item
$
\chi=d'(\chi_1-b)+b\chi'=d'\chi_1+b(\chi'-d')
$
\item
$$
\HH_1(t)=\frac {(t^{mb}-1)(t^{nb}-1)} {(t^{mnb}-1)^b}
$$
 \item
$$
\HH(t)=\frac {\HH_1(t^{d'})(\HH'(t))^b} {(t^{d'}-1)^b}
$$
\item
$$
\VV_1(t)=\frac {(t-1)^{mb}} {(t^{nb/(n,a)}-1)^{(n,a)(mb -1)}}
$$
\item
$$
 \VV(t)=\frac{(\VV_1(t))^{d'}\VV'(t^b)} {(t^b-1)^{d'}}
$$

\end{enumerate}
\end{theorem}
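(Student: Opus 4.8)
The plan is to prove Theorems \ref{protosame2} and \ref{surfacerecursion} simultaneously, by induction on the number $e$ of characteristic pairs, mimicking the proof of Theorems \ref{protosame} and \ref{curverecursion} but now keeping track of two monodromies at once and coping with the new feature that the transverse slice of the truncation is in general reducible, with exactly $b$ branches. The case $e=0$ is the smooth case and the recursion is a tautology when $e=1$, so the content is in the odd-numbered (truncation) formulas together with the reduction from $e$ to $e-1$.

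For the odd-numbered formulas I would first observe that the $mb$ conjugates of $\zeta_1=x^{a/mb}y^{n/m}$ are precisely the $mb$ roots of $x^a y^{nb}$, so $f_1=z^{mb}-x^ay^{nb}$ and $d_1=mb$; this is formula (1). Fixing $x=C$ and projecting the transverse Milnor fiber $\{z^{mb}-C^ay^{nb}=\epsilon\}$ to the $y$-line, exactly as in the curve case it retracts onto the complete bipartite graph on the $mb$ points $(0,\epsilon^{1/mb})$ and the $nb$ points lying over $z=0$; its first Betti number is $(mb-1)(nb-1)$, which gives formula (3). Now I would use the fact that the graded characteristic function of a graph automorphism equals $\prod(t^{o}-1)$ over the vertex orbits (of sizes $o$) divided by the same product over the edge orbits. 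Under the horizontal loop both vertex sets are cyclically permuted and the $mb\cdot nb$ edges split into $b$ orbits of size $\operatorname{lcm}(mb,nb)=mnb$, which yields formula (5). Under the vertical loop, the substitution $Y=x^{a/nb}y$ makes the fiber $x$-independent and shows the monodromy acts by $Y\mapsto\zeta Y$ with $\zeta$ a primitive $(nb/(n,a))$-th root of unity (using $\gcd(a,nb)=(n,a)$); on the graph this fixes the $z$-side pointwise and rotates the $y$-side, and the same orbit count produces formula (7).

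For the recursive formulas and the inductive step, I would first change coordinates so that each sheet has leading term exactly $x^{a/mb}y^{n/m}$ with coefficient $1$ (subtracting off the integer-exponent part of $\zeta$ and rescaling $z$), so that $f=\prod^{d}(z-[x^{a/mb}y^{n/m}+\text{higher}])$, and then form $f/f_1^{d/d_1}$. Next comes the toric modification dictated by the first characteristic pair — the analogue of the substitution $y=u^mv^r,\ z=u^nv^s$, extended to accommodate $x$ and the denominator $b$ — producing an exceptional configuration with a single rupture component meeting the strict transform, along which $f/f_1^{d/d_1}$ is identically $1$ except at the finitely many points where the strict transform lies. A local analytic map identifies a neighborhood $N$ of such a point with a ball on which, after discarding unit factors by Lemma \ref{approx1}, $d/d_1$ of the factors of $f$ become the defining function of a quasi-ordinary surface with only $e-1$ characteristic pairs; by the inductive form of Theorem \ref{protosame2} this surface has the same monodromies as its prototype, which after reindexing is the derived surface $S'$ — this pins down $d'=d/d_1$ (formula (2)) and the exponent formulas for $\zeta'$. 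Splitting the transverse Milnor fiber into the part outside the $N$'s and the part inside: outside, Lemma \ref{approx1} (horizontal loop) and Lemma \ref{approx2} (vertical loop, where one has a genuine family over the circle $|x|=\rho$) let me replace $f$ by $f_1^{d/d_1}$, whose Milnor fiber is $d'$ disjoint copies of that of $f_1$, cyclically permuted under the horizontal loop but individually preserved under the vertical loop, contributing $\HH_1(t^{d'})$ and $(\VV_1(t))^{d'}$. Inside, since the truncation slice has $b$ branches there are $b$ conjugate copies of (a piece isomorphic to) the transverse Milnor fiber of $S'$, each with $d'$ small disks removed and cyclically permuted; and because the blow-down identifies the base coordinate $x'$ of $S'$ with $x^b$, one loop of $x$ is a $b$-th of a loop of $x'$, producing the substitution $t\mapsto t^b$. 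The inside therefore contributes $(\HH'(t))^b/(t^{d'}-1)^b$ and $\VV'(t^b)/(t^b-1)^{d'}$. Multiplying the two contributions gives formulas (6) and (8), taking degrees gives formula (4), and the coincidence of the monodromies of $S$ with those of its prototype drops out of the recursion exactly as in the curve case.

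The homological bookkeeping — the graph retractions, the orbit counts, and the passage to characteristic functions — is routine once the geometry is in place, and the two approximation lemmas do all of the analytic work of replacing $f$ by $f_1^{d/d_1}$ and by its unit-free model. The real obstacle is the explicit toric resolution of the truncation $z^{mb}=x^ay^{nb}$ in three variables: correctly locating the rupture component, writing honest coordinate charts around it (the right three-variable analogue of the matrix $\begin{bmatrix}m&n\\ r&s\end{bmatrix}$, now entangled with the $x$-direction and the denominator $b$), verifying that $f/f_1^{d/d_1}$ really is a unit off the indeterminacy locus, and — most delicately — reading off from the blow-down both the precise formulas for $\mu_i'$ and $\lambda_i'$ and the two genuinely new phenomena, namely the $b$ conjugate copies of the derived picture and the relation $x'=x^b$ responsible for the $t\mapsto t^b$ appearing in formula (8).
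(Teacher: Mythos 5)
Your proposal follows the paper's own proof architecture almost exactly: simultaneous induction with Theorem \ref{protosame2}, the bipartite-graph retraction and orbit counts for the odd-numbered formulas (your counts for both the horizontal and the vertical loop agree with the paper's), the toric chart at the rupture component, the tubular neighborhood $N$ of the torus knot $v^b=x^a$ splitting the transverse Milnor fiber into an outside piece handled by the approximation lemmas and an inside piece identified with $b$ conjugate copies of the derived picture, and the relation between $x$ and the base coordinate of $S'$ producing the $t\mapsto t^b$ in formula (8).

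There is, however, one step that fails as you state it, and it is precisely where the surface case diverges from the curve case: you claim that $f/f_1^{d/d_1}$ is \emph{identically} $1$ along the exceptional configuration away from the points of the strict transform. This is false whenever the series contains further terms $x^{\lambda}y^{\mu_1}$ with the same $y$-exponent $\mu_1=n/m$, which can occur even among the characteristic pairs (e.g.\ $\zeta=x^{1/2}y^{4/3}+x^{2/3}y^{4/3}+x^{11/12}y^{4/3}$): on the divisors where $z^m/y^n=0$ the ratio equals $\prod^d\bigl(1+\sum_{\lambda>a/(mb)}c_{\lambda\mu_1}x^{\lambda-a/(mb)}\bigr)$, which is not $1$ but merely has positive real part for $|x|$ small --- fortunately all that Lemma \ref{approx2} requires, so the argument is repairable, but the positivity must be argued uniformly over the circle $|x|=\rho$. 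The same repetition of exponents makes the strict transform meet the rupture component in a cluster of points \emph{near}, not on, the torus knot, so one must check that $N$ (chosen injectively, uniformly in $x$) swallows this cluster. Two further points are assumed rather than proved: the identity $f/f_1^{d/(mb)}=\prod^d(1-\cdots)$ needs the counting fact that each conjugate of the leading term $x^{a/(mb)}y^{n/m}$ occurs exactly $d/(mb)$ times among the $d$ conjugates of $\zeta$ (the paper establishes this with a deck-transformation argument on roots of unity), and the explicit exponent formulas for $\zeta'$ --- on which formulas (2), (6), (8) depend through $\HH'$ and $\VV'$ --- are exactly the computation you defer. Finally, a small slip: the relation is $x=(x')^b$, i.e.\ $x'$ is a $b$-th root of $x$, not $x'=x^b$ as written; your conclusion that one $x$-loop is a $b$-th of an $x'$-loop, hence the substitution $t\mapsto t^b$, is nonetheless the correct one.
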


\begin{proof}
As indicated, we will simultaneously provide an inductive proof of Theorem \ref{protosame2} (inducting on the number of characteristic pairs) and a recursive proof of Theorem \ref{surfacerecursion}.
\par
Fixing a value of $x$, consider the transverse Milnor fiber of the truncation, defined by $z^{mb}-x^{a}y^{nb}=\epsilon$, and its image under the projection $\pi$. There is total ramification above the $(nb)$th roots of $(-\epsilon/x^a)$. We can retract a neighborhood of 0 onto the union $L_x$ of line segments from $0$ to these points, in such a way that there is a compatible retraction of the Milnor fiber onto $\pi^{-1}L_x$, which is the complete bigraph on the $nb$ points
\begin{equation}\label{set1}
\left(\sqrt[nb]{-\epsilon/x^a},0\right)
\end{equation}
and the $mb$ points
\begin{equation}\label{set2}
\left(0,\sqrt[mb]{\epsilon}\right). 
\end{equation}
As $\epsilon$ goes around a circle, each set of points is cyclically permuted. Since $m$ and $n$ are relatively prime, the $mnb^2$ edges of the graph fall into $b$ orbits of length $mnb$. This confirms formula (5).
If $\epsilon$ is fixed but $x$ varies, the retractions of the Milnor fibers fit together continuously. The points (\ref{set2}) are fixed but the points (\ref{set1}) fall into $(n,a)$ orbits each of size $nb/(n,a)$. For the edges of the graph the orbits likewise have this size, and there are $(n,a)mb$ such orbits. This confirms formula (7). Formula (3) follows by taking the degree, and formula (1) is trivial. 
\par
To verify the recursive formulas and to handle the inductive step in the proof of Theorem \ref{protosame2}, 
suppose we are given a curve with series (\ref{powseries}) and prototype (\ref{prototype2}).
We first replace
$$
\frac{z-\sum_{(\lambda,\mu)\in\ZZ\times\ZZ} c_{\lambda\mu} x^{\lambda} y^{\mu}}{c_{\lambda_1\mu_1}}.
$$
by $z$.
In the new coordinate system, the surface is defined by the vanishing of
\begin{equation}
\label{fproduct}
f=\prod^d\left(z-\left[x^{\frac{a}{mb}}y^{\frac{n}{m}}
+\sum_{(\lambda,\mu)>\left(\frac{a}{mb},\frac{n}{m}\right)} c_{\lambda\mu}x^{\lambda}y^{\mu}\right]\right),
\end{equation}
(where for simplicity the coefficients have been renamed).
The truncation is defined by the vanishing of
\begin{equation}
\label{f1product}
f_1= \prod^{mb}(z-x^{\frac{a}{mb}}y^{\frac{n}{m}})=z^{mb}-x^{a}y^{nb}.
\end{equation}
\par
Dividing (\ref{fproduct}) 
by a power of (\ref{f1product}), we claim that
\begin{equation}\label{compare2}
\frac{f}{f_1^{d/(mb)}}= \prod^d\left(1-\frac{\sum_{(\lambda,\mu)>\left(\frac{a}{mb},\frac{n}{m}\right)} c_{\lambda\mu}x^{\lambda}y^{\mu}}{z-x^{\frac{a}{mb}}y^{\frac{n}{m}}}\right).
\end{equation}
To justify this we argue as follows. Let $(x,y)$ be a point close to the origin but not lying on the $x$- or $y$-axis. Let $d_x$ be the common denominator of all $x$-exponents appearing in (\ref{fproduct}); similarly let $d_y$ be the common denominator of all $y$-exponents. Fix a value $\bar{x}=x^{1/d_x}$ and similarly a value $\bar{y}=y^{1/d_y}$. 
Then there is a map from the product of two groups of roots of unity:
$$
\mu_{d_x} \times \mu_{d_y} \to \text{points on the surface projecting to $(x,y)$}
$$
whose last coordinate is given by
\begin{equation}
\label{deck}
(\alpha,\beta) \mapsto (\alpha\bar{x})^{ad_x/(mb)}(\beta\bar{y})^{nd_y/m}
+\sum_{(\lambda,\mu)>\left(\frac{a}{mb},\frac{n}{m}\right)} c_{\lambda\mu}(\alpha\bar{x})^{\lambda d_x}(\beta\bar{y})^{\mu d_y}.
\end{equation}
(Note that all exponents are integers.) This map factors through the quotient $(\mu_{d_x} \times \mu_{d_y})/K$, where $K$ consists of all elements determining the same point as $(1,1)$. This quotient group has order $d$.
Similarly there is a map
$$
(\alpha,\beta) \mapsto (\alpha\bar{x})^{ad_x/(mb)}(\beta\bar{y})^{nd_y/m}
$$
onto the points of the truncation surface, with kernel $K_1$ and with quotient group $(\mu_{d_x} \times \mu_{d_y})/K_1$ of order $mb$. A fiber of the homomorphism
$$   
(\mu_{d_x} \times \mu_{d_y})/K \to (\mu_{d_x} \times \mu_{d_y})/K_1
$$
(i.e, a coset of the kernel $K_1/K$) corresponds to all distinct series in (\ref{deck}) compatible with a specified first term. Since these fibers all have the same cardinality $d/(mb)$, the calculation leading to (\ref{compare2}) is justified.
\par
Now we suppose that $x$ moves on the circle of radius $\rho$. All of our constructions will be done equivariantly, i.e., by doing the same thing simultaneously to all transverse slices. First, in each transverse slice, we perform the series of blowups dictated by $\mu_1=n/m$ and the Euclidean algorithm. Doing this for the truncation, we obtain (for each transverse slice) a total transform consisting of certain exceptional divisors occurring with certain multiplicities, together with a strict transform meeting just one exceptional divisor, which we call the {\em rupture component}. Along this chain the function $z^m/y^n$ has no indeterminacy, and in fact except along the rupture component its value is either $0$ or $\infty$.
\par
If all of the exponents $\mu$ appearing in (\ref{compare2}) were strictly greater than $n/m$, then we could argue, as in the earlier proof of Theorem \ref{curverecursion}, that the value of (\ref{compare2}) along a non-rupture exceptional divisor is 1. But since there may be a repetition of exponents (even in the characteristic pairs) we need to be more careful. If $z^m/y^n=0$, then
\begin{equation*}
\frac{f}{f_1^{d/(mb)}}
= 
\prod^d\left(1+
\sum_{(\lambda,\mu)>\left(\frac{a}{mb},\frac{n}{m}\right)} c_{\lambda\mu}x^{\lambda-a/(mb)}y^{\mu-n/m}
\right),
\end{equation*}
and since $y$ vanishes everywhere along the exceptional divisors we find that
\begin{equation*}
\frac{f}{f_1^{d/(mb)}}
= 
\prod^d\left(1+
\sum_{\lambda>\frac{a}{mb}} c_{\lambda\mu_1}x^{\lambda-a/(mb)}
\right).
\end{equation*}
Note that by choosing $x$ sufficiently close to 0 we can guarantee that this value has positive real part. If $z^m/y^n=\infty$, i.e. $y^n/z^m=0$, then a similar calculation shows that the value of (\ref{compare2}) is 1.
\par
To work in a chart containing the rupture component, we use substitutions dictated by the matrix
$$ \left[ \begin{array}{cc} 
	m  & n  \\
	r & s
\end{array} \right],$$
where $r$ and $s$ are the smallest positive integers for which the determinant is 1,
namely
\begin{gather*}
y=u^{m} v^{r} \\
z=u^{n} v^{s}.
\end{gather*}
We find that in this chart the total transform of the truncation is defined by the vanishing of
$$
f_1=u^{mnb}v^{rnb}(v^{b}-x^{a}),
$$
and its strict transform is defined by the vanishing of the last factor. Note that it meets the $v$-axis in $b$ points, and that as $x$ travels around a small circle these points trace out the torus knot $v^b=x^a$.
The total transform of the given surface is defined by the vanishing of
$$
f=\prod^d\left(u^{n}v^{s}-\left[x^{\frac{a}{mb}}u^{n}v^{rn/m}+\sum_{(\lambda,\mu)>\left(\frac{a}{mb},\frac{n}{m}\right)} c_{\lambda\mu}x^{\lambda}u^{m\mu}v^{r\mu}\right]\right)
$$
which may be rewritten as
\begin{equation}\label{expanded}
\begin{split}
f=&u^{nd}v^{rnd/m}x^{ad/(mb)} \\
&\prod^d\left(
\left(\frac{v}{x^{a/b}}
\right)^{1/m}
-\left[
1
+\sum_{(\lambda,\mu)>\left(\frac{a}{mb},\frac{n}{m}\right)} 
c_{\lambda\mu}x^{\lambda-a/(mb)}u^{m\mu-n}v^{r(m\mu-n)/m}
\right]
\right).
\end{split}
\end{equation}
Again if all the values of $\mu$ appearing in (\ref{expanded}) are strictly greater than $n/m$, then we can assert that the strict transform meets the $v$-axis in the same set of $b$ points, but if there is a repetition of exponents then we find that the strict transform meets this axis at all points at which (for some choice of conjugate)
\begin{equation}\label{clustered}
v^b=\left(1+
\sum_{\lambda>\frac{a}{mb}} c_{\lambda\mu_1}x^{\lambda-a/(mb)}
\right)^{mb}
x^a.
\end{equation}
\par
We also note that
$$
\frac{f}{f_1^{d/(mb)}}= \prod^d\left(1-\frac{\sum_{(\lambda,\mu)>\left(\frac{a}{mb},\frac{n}{m}\right)} 
c_{\lambda\mu}x^{\lambda-a/(mb)}u^{m\mu-n}v^{r(m\mu-n)/m}}{\left(\frac{v}{x^{a/b}}
\right)^{1/m}-1}\right),
$$
and that its restriction to the rupture component is
\begin{equation}\label{restriction}
\prod^d\left(1-\frac{\sum_{\lambda>\frac{a}{mb}} 
c_{\lambda\mu_1}x^{\lambda-a/(mb)}}{\left(\frac{v}{x^{a/b}}
\right)^{1/m}-1}\right).
\end{equation}
\par
Introducing three new variables $x'$, $y'$, and $w$, let $B$ denote the product of the circle $\| x' \| =\rho^{1/b}$ and the ball $\|(y',w)\|\leq\delta'$.
Map this product to a neighborhood $N$ of the torus knot as follows:
\begin{gather*}
x=(x')^{b} \\
u = \frac{y'}{(w+1)^{r}\rho^{ar/(mb)}} \\
v = (w+1)^{m} (x')^{a}
\end{gather*}
(See Figure \ref{fancymap}.)
Note that the circle $(y',w)=(0,0)$ is mapped onto the knot. We claim that if $\delta'$ is sufficiently small then the map is injective (regardless of the value of $\rho$). 
Indeed, suppose that $(x'_1,y'_1,w_1)$ and $(x'_2,y'_2,w_2)$ are two points whose images agree. Then
$$
\left(\frac{w_2+1}{w_1+1}\right)^m=\left(\frac{x'_1}{x'_2}\right)^a,
$$
where the quantity on the right is a $b$th root of 1. If $w_1$ and $w_2$ are sufficiently close to 0 then this root must be 1 itself. Since $a$ and $b$ are relatively prime, this implies that $x'_1/x'_2=1$. Since the map $w \mapsto (w+1)^m$ is injective near 0, we see that $w_1=w_2$ and then that $y'_1=y'_2$.
\par
Thus $N$ is a tubular neighborhood of the torus knot: its intersection with each transverse plane consists of $b$ disjoint topological disks, each of which encloses one of the points where the torus knot meets the plane.
\par
\begin{figure}
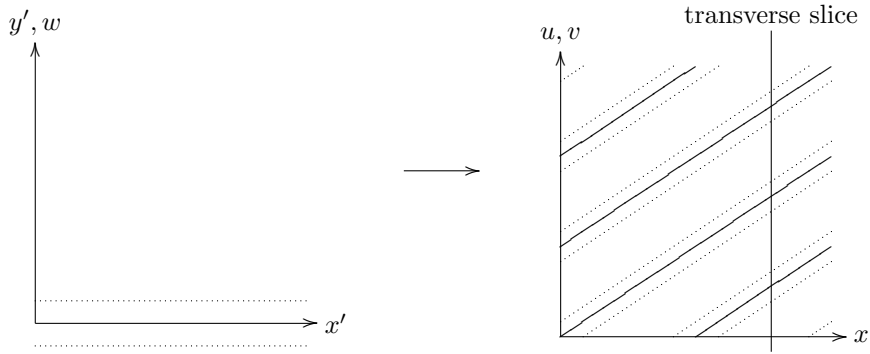

\[ 
\xy 
(0,0)*
{\xy
{\ar(0,0)*{};(40,0)*+{x'}
\ar(0,0)*{};(0,40)*+{y',w}
\ar@{.. }(0,3)*{};(36,3)*{}
\ar@{.. }(0,-3)*{};(36,-3)*{}
}
\endxy};
(35,0)*
{\xy
{\ar(0,0)*{};(10,0){}}
\endxy};
(70,0)*
{\xy
{\ar(0,0)*{};(40,0)*+{x}
\ar(0,0)*{};(0,40)*+{u,v}
\ar@{ }(0,0)*{};(36,24)*{}
\ar@{ }(0,12)*{};(36,36)*{}
\ar@{ }(0,24)*{};(18,36)*{}
\ar@{ }(18,0)*{};(36,12)*{}
\ar@{.. }(0,2)*{};(36,26)*{}
\ar@{.. }(3,0)*{};(36,22)*{}
\ar@{.. }(0,34)*{};(3,36)*{}
\ar@{.. }(0,14)*{};(33,36)*{}
\ar@{.. }(0,10)*{};(36,34)*{}
\ar@{.. }(33,0)*{};(36,2)*{}
\ar@{.. }(0,26)*{};(15,36)*{}
\ar@{.. }(0,22)*{};(21,36)*{}
\ar@{.. }(15,0)*{};(36,14)*{}
\ar@{.. }(21,0)*{};(36,10)*{}
\ar@{ }(28,-2)*{};(28,43)*+{\text{transverse slice}}
}
\endxy};
\endxy 
\]
\caption{A tubular neighborhood $B$ of the circle $\| x' \| =\rho^{1/b}$ is mapped onto a tubular neighborhood $N$ of the torus knot $v^b=x^a$ (where $u=0$, and $x$ moves on the circle of radius $\rho$). Each transverse slice $x=\text{constant}$ meets $N$ in $b$ disjoint topological balls. In this example, $a=2$ and $b=3$.}
\label{fancymap}
\end{figure}
\par
We can regard each transverse Milnor fiber as a subset of the surface obtained from the transverse plane $x=\text{constant}$ by the sequence of blowups. Let us assume that the choices of $\delta$, $x$, and $\epsilon$ which determine the transverse Milnor fiber are made subsequent to the choice of $\delta'$. We claim that we can make these choices so as to guarantee that the strict transform of the surface lies entirely within $N$. Indeed, we note that on the strict transform
$$
w=
\sum_{(\lambda,\mu)>\left(\frac{a}{mb},\frac{n}{m}\right)} c_{\lambda\mu}x^{\lambda-a/(mb)}y^{\mu-n/m},
$$
where in each term at least one of the exponents is positive. 
Thus by choosing $\delta$ and $\|x\|$ sufficiently small we may force $w$ arbitrarily close to 0. Now observe that
$$
(y')^m=y \left( \frac{x'}{\rho^{1/b}} \right)^{-ar}
$$
and that $\|x'/\rho^{1/b}\|=1$. Thus we may also force $\|y'\|$ to be arbitrarily small. Note in particular that $N$ will contain the points where the strict transform meets the $v$-axis (as determined by equation (\ref{clustered})); Figure \ref{braiding} shows an example.
\par
Looking at formula (\ref{restriction}), we note that outside of $N$ the value of  $\left(\frac{v}{x^{a/b}}\right)^{1/m}$ along the rupture component is bounded away from 1, with the bound being independent of the choice of $x$; thus by choosing $x$ sufficiently close to 0 we can guarantee that the value of (\ref{restriction}) has positive real part. Finally by choosing $\epsilon$ sufficiently close to 0, we can guarantee that the Milnor fiber is transverse to the boundary of $N$ and that its boundary lies entirely within $N$. Our transverse Milnor fiber is thus divided into two pieces. (See Figure \ref{twopieces2}.)
\par
\begin{figure}
\scalebox{0.40}
{\includegraphics{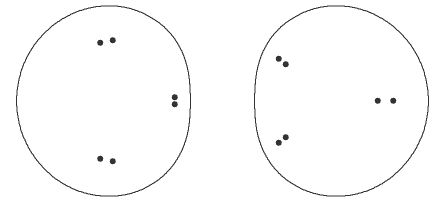}}
\caption{The strict transform of a transverse slice of the quasi-ordinary surface $\zeta=x^{1/2}y^{4/3}+x^{2/3}y^{4/3}+x^{11/12}y^{4/3}$ meets the (complex) $v$-axis in $12$ points, which are clustered around the two points where the torus knot $v^2=x^3$ pierces the axis. The tubular neighborhood $N$ meets the axis in two topological disks.}
\label{braiding}
\end{figure}
\par
\begin{figure}
\scalebox{0.25}
{\includegraphics{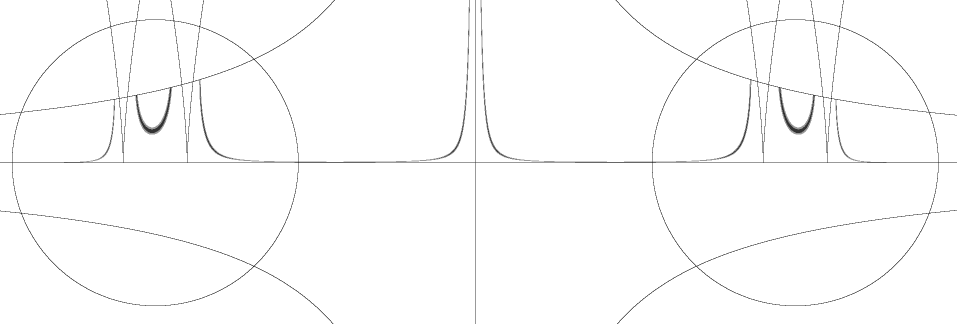}}
\caption{The transverse Milnor fiber is divided into two pieces by the boundary of $N$ (indicated by two circles). The rupture component is horizontal, and another exceptional divisor is shown vertically. The strict transform enters from above.}
\label{twopieces2}
\end{figure}
\par
Consider first the piece of the Milnor fiber lying outside of $N$. By the approximation lemma \ref{approx2}, for this piece the monodromy of $f$ is the same as the monodromy of $f_1^{d/(mb)}$. The Milnor fiber has $d/(mb)$ connected components corresponding to all possible values of 
$\eta=\epsilon^{mb/d}$, and each one is a copy of the Milnor fiber for $f_1$.
As $\epsilon$ goes around a circle, these copies are cyclically permuted. As 
$\epsilon$ goes around this circle $d/(mb)$ times, however, each $\eta$ goes once around a circle. Thus the horizontal monodromy of this piece is $\HH_1(t^{d/(mb)})$. 
But if $\epsilon$ is fixed and $x$ varies, then each copy is individually acted upon by the vertical monodromy, so that the contribution from this piece is $(\VV_1(t))^{d/(mb)}$.
\par
Now consider the piece of the Milnor fiber lying inside $N$. Note that it has two sorts of boundary components: the components of the original link and those components created by its intersection with the boundary sphere of $N$. To analyze it, we look at its inverse image in $B$, which is contained in the $b$ disjoint balls centered at the points $(x',y',w)=(x^{1/b},0,0)$ (allowing all possible roots).
\par
When pulled back to $B$, most of the $d$ factors at the end of (\ref{expanded}) become units. To see this, first observe that we can force the value in square brackets to be arbitrarily close to 1 by choosing sufficiently small radii  
$\delta'$ and $\rho$. To obtain a non-unit, we must therefore pick the ``principal value'' of $x^{1/b}$ for which it equals $x'$ and then similarly pick the appropriate $m$th root of $v/(x')^a$ so that 
$$
\left(\frac{v}{(x')^{a}}
\right)^{1/m}=w+1;
$$
note that these choices can be made uniformly throughout $B$.
Thus $d/(mb)$ of the factors at the end of (\ref{expanded}) become
$$
w-\sum_{(\lambda,\mu)>\left(\frac{a}{mb},\frac{n}{m}\right)} c'_{\lambda\mu}(x')^{b\lambda-a/m+ar(m\mu-n)/m}(y')^{m\mu-n}
$$
(where $c'_{\lambda\mu}=c_{\lambda\mu}\rho^{-ar(m\mu-n)/(mb)}$), whereas the remaining $d-d/(mb)$ factors become units. Each such unit takes its values in an arbitrarily small neighborhood of some $e-1$, where $e$ is a nontrivial $(mb)$th root of unity. Thus by the approximation lemma \ref{approx2}, we may ignore all unit factors in $f$.
Thus we may assume that the function defining this piece of the Milnor fiber is
$$
(x')^{ads}(y')^{nd}\prod^{d/(mb)}\left(w-\sum_{(\lambda,\mu)>\left(\frac{a}{mb},\frac{n}{m}\right)} c'_{\lambda\mu}(x')^{b\lambda-a/m+ar(m\mu-n)/m}(y')^{m\mu-n}\right).
$$
\par
The map $(x',y',w) \mapsto (x',y',(x')^{asmb}(y')^{nmb}w)$ takes this piece to the transverse Milnor fiber of the quasi-ordinary surface with series
\begin{equation}\label{derivedseries2}
\sum_{(\lambda,\mu)>\left(\frac{a}{mb},\frac{n}{m}\right)} c'_{\lambda\mu}(x')^{b\lambda-a/m+ar(m\mu-n)/m+ambs}(y')^{m\mu-n+nmb},
\end{equation}
but it misses disks centered at the $d/(mb)$ points 
\begin{equation}\label{centers}
(x',0,\epsilon^{d/(mb)}).
\end{equation}
(Note that all of the exponents on $y'$ in (\ref{derivedseries2}) are positive; thus we are still in the reduced case.)
The horizontal monodromy permutes these disks. In (\ref{derivedseries2}) there are $e-1$ characteristic pairs, whereas our original series had $e$ characteristic pairs. By the inductive hypothesis, the horizontal monodromy of this curve is the same as that of its prototype, which has series
$$
\sum_{i=2}^{e}
(x')^{b[\lambda_i-\lambda_1+mb\lambda_1+rm(\mu_i-\mu_1+mb\mu_1)\lambda_1]}
(y')^{m(\mu_i-\mu_1+mb\mu_1)}.
$$
(In calculating the first exponent we have used $ms=rn+1$.)
By reindexing we obtain the series of the derived surface. 
Thus $d'=d/(mb)$, confirming formula (2) of the theorem. Since there are $b$ copies of this situation (one for each $b$th root of $x$), 
the monodromy of this piece of the transverse Milnor fiber is
$$
\left(\frac{\HH'(t)}
{t^{d'}-1}\right)^{b}.
$$
Combining this with our conclusion about the monodromy of the first piece, we obtain formula (6). Then we obtain formula (4) by computing the degree of both sides of (6).
\par
Turning to the vertical monodromy,  we remark that it cyclically permutes the individual pieces of the Milnor fiber cut out by the $b$ disjoint balls. Its $b$th power acts on each such piece by the vertical monodromy of the derived surface, in such a way that the disks of (\ref{centers}) are cyclically permuted. Thus the contribution to the vertical monodromy of our original surface is
$$
\frac{\VV'(T)} {(T-1)^{d'}}
$$
where $T=t^b$.
Combining this with our conclusion about the vertical monodromy of the first piece, we obtain formula (8). 
\end{proof}
Here is an example. If we begin with the surface parametrized by
$$
\zeta=x^{1/2}y^{3/2}+x^{1/2}y^{7/4}+x^{2/3}y^{11/6},
$$
then its truncation and derived surface are parametrized by
$$
\zeta_1=x^{1/2}y^{3/2}\qquad
\text{and}
\qquad
\zeta'=x^{17/4}y^{13/2}+x^{9/2}y^{20/3}.
$$
Repeating the process, the new truncation and the second derived surface are parametrized by
$$
\zeta'_1=x^{17/4}y^{13/2}\qquad
\text{and}
 \qquad
 \zeta''=x^{1438/3}y^{157/3}.
 $$
 By repeated use of the first two formulas in Theorem \ref{surfacerecursion},
 we find that the degree of the quasi-ordinary surface is
$$
d=d_1 d'_1 d''=2\cdot4\cdot3=24. 
$$
By formulas (3) and (4), the Euler characteristic of the transverse Milnor fiber is
$$
\chi=d'(\chi_1-b)+d''(\chi'_1-b')+b'\chi''=12(-1-1)+3(-74-2)+2(-311)=-874.
$$
By formulas (5) and (6), the horizontal monodromy is
\begin{equation}
\begin{split}
\HH(t)&=
\frac{\HH_1(t^{d'})}{(t^{d'}-1)^{b}}
\left[ \frac{\HH'_1(t^{d''})}{(t^{d''}-1)^{b'}} \right]^{b}
\left[ \HH''(t) \right]^{bb'} \\
&=
\frac{(t^{24}-1)(t^{36}-1)}{(t^{72}-1)(t^{12}-1)}
\left[ \frac{(t^{12}-1)(t^{78}-1)}{(t^{156}-1)^{2}(t^{3}-1)^{2}} \right]^{1}
\left[ \frac{(t^{3}-1)(t^{157}-1)}{t^{471}-1} \right]^{2}.
\end{split}
\end{equation}
By formulas (7) and (8), the vertical monodromy is
\begin{equation}
\begin{split}
\VV(t)&=
\left[ \frac{\VV_1(t)}{t^b-1} \right]^{d'}
\left[ \frac{\VV'_1(t^{b})}{(t^{bb'}-1)} \right]^{d''}
\cdot \VV''(t^{bb'}) \\
&=
\left[ \frac{(t-1)^2}{(t^3-1)(t-1)} \right]^{12}
\left[ \frac{(t-1)^4}{(t^{26}-1)^{2}(t^2-1)} \right]^{3}
\cdot \frac{(t^2-1)^3}{(t^{314}-1)^2}.
\end{split}
\end{equation}

\section{Non-reduced quasi-ordinary surfaces}
\label{nrqos}
We now consider the non-reduced case. Suppose that in (\ref{cpairs})
we have $\mu_i = 0$ for $1 \leq i \leq s < e$. Then the singular locus of $S$ may contain a curve which does not lie in the $x$-$y$ plane, namely the intersection of $S$ with the plane $y=0$. This curve projects to the $x$-axis, and if we restrict our attention to those points lying over a small circle we see an $N$-sheeted covering $C \to S^1$, where $N$ is the least common denominator of $\{ \lambda_i  \}_{i = 1}^s$. The transverse slice of $S$ (as defined in section \ref{qos}) will then be a curve with $N$ singularities.
For example, on the surface parametrized by $\zeta=x^{3/2}+x^{2}y^{3/2}$ the curve $z^2=x^3$ is a component of the singular locus.  A transverse slice is shown in Figure \ref{multi}.
\par
In this case, the correct definitions of the horizontal and vertical fibrations use Milnor fibers at the points of $C$. Such a Milnor fiber consists of those points within a transverse slice, within a sufficiently small neighborhood of the specified point of $C$, and satisfying $f=\epsilon$ (for sufficiently small $\epsilon$). Each transverse slice will contain $N$ such Milnor fibers, and they form the fibers of a fibration over $C \times S^1$ (the latter factor consisting of all $\epsilon$ on a small circle). One obtains the horizontal or vertical fibration by fixing (respectively) the point of C or the value of $\epsilon$.
\par
\begin{figure}
\scalebox{0.75}
{\includegraphics{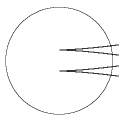}}
\caption{The real points of the transverse slice of the quasi-ordinary surface parametrized by $\zeta=x^{3/2}+x^{2}y^{3/2}$. Here $N=2$.} 
\label{multi}
\end{figure}
\par
Lipman \cite{Li2} (p. 65 ff.) shows that we can replace $S$ by a reduced quasi-ordinary surface  $S'$ with characteristic pairs $\{ (\lambda_i', \mu_i') = (N\lambda_{i+s}, \mu_{i+s})    \}$, $1 \leq i \leq e - s$, so that the horizontal and vertical fibrations of $S$ (as just defined) are the same as those of $S'$ (as defined in section \ref{qos}). Thus the characteristic pairs $\{ (\lambda_i, 0) \} _{i = 1}^s$ are invisible in these monodromies, but they are precisely what is recovered by the topological zeta function of the two-dimensional singularity; see \cite{MN} and \cite{McE}.



 \end{document}